\let\@fnsymbol\@arabic
\newtheorem{theorem}{Theorem}[section]
\newtheorem{definition}[theorem]{Definition}
\newtheorem{corollary}[theorem]{Corollary}
\numberwithin{equation}{section}
\def\neweq#1{\begin{equation}\label{#1}}
\def\endeq{\end{equation}}
\def\eq#1{(\ref{#1})}
\newcommand{\R}{\mathbb{R}}
\newcommand{\N}{\mathbb{N}}
\newcommand{\eps}{\varepsilon}
\def\trait #1 #2 #3 {\vrule width #1pt height #2pt depth #3pt}
\def\fin{
    \trait .3 5 0
    \trait 5 .3 0
    \kern-5pt
    \trait 5 5 -4.7
    \trait 0.3 5 0
\medskip}
\begin{document}
\title{Which residual mode captures the energy of the dominating mode in second order Hamiltonian systems?}

\author{ Elvise Berchio\thanks{
Department of Mathematical Sciences,
Politecnico di Torino,
Corso Duca degli Abruzzi 24, 10129 Torino, Italy,
 E-Mail: \texttt{elvise.berchio@polito.it}}
\and
Filippo Gazzola\thanks{ Department of Mathematics,
Politecnico di Milano,
Piazza Leonardo da Vinci 32,
20133 Milano, Italy, E-Mail: \texttt{filippo.gazzola@polimi.it}}
\and
Chiara Zanini\thanks{
Department of Mathematical Sciences,
Politecnico di Torino,
Corso Duca degli Abruzzi 24, 10129 Torino, Italy, 
E-Mail: \texttt{chiara.zanini@polito.it}}
}

\date{ }

\maketitle
\begin{abstract}
Motivated by the instability of suspension bridges, we consider a class of second order Hamiltonian systems where one component initially holds almost
all the energy of the system. We show that if the total energy is sufficiently small then it remains on this component, whereas if the total energy is
larger it may transfer to the other components. Through Mathieu equations we explain the precise mechanism which governs the energy transfer.
\end{abstract}
\bigskip

\noindent{\em Keywords:} second order Hamiltonian systems, stability, Mathieu equations.
\medskip

\noindent{\em 2010 MSC:} 37C75, 34C15, 34B30

\bigskip
\section{Introduction and motivation}

The spectacular collapse of the Tacoma Narrows Bridge (occurred on November 7, 1940, see \cite{ammann,scott}) raised many questions
on the instability of suspension bridges. Soon after the collapse, several theoretical and experimental studies have been performed
\cite{bleichsolo,bleich,pittel,rocard}. The main issue was to understand the origin of the instability \cite{stein1} and, in particular, how could
vertical oscillations be suddenly transformed into destructive torsional oscillations. The focus was essentially on the aerodynamic instability
\cite{scantom} but no conclusive answer was found: in the last few years, the problem of aerodynamic instability of suspension bridges is still under
study \cite{como}. Only very recently, the attention has turned to the nonlinear behavior of structures \cite{lacarbonara}.\par In \cite{argaz} and \cite{bergaz}  the structural instability of suspension bridges has been highlighted by analyzing two fairly different isolated models. In \cite{argaz} the bridge was seen as a number of interacting parallel rods representing the cross sections of the bridge, each one having two degrees of freedom: the vertical displacement
of the barycenter and the torsional angle. A torsional instability was numerically found: if vertical displacements are sufficiently large then small torsional angles may suddenly grow up leading to the collapse of the bridge. The main tools to reach this result were suitable Poincar\'e
maps \cite{meccel}. See also \cite{mck} for some aerodynamics effects. In \cite{bergaz} the bridge was modeled as a degenerate plate, named fish-bone by the authors, where the midline of the plate
was seen as a beam and virtual orthogonal cross sections were considered free to rotate about their center placed on the beam. See also \cite{bergaz2} where the aerodynamic forces were introduced in the model. The same torsional
instability was found, both numerically and theoretically, and the instability was justified through the analysis of suitable Hill
equations \cite{hill}.\par
It is clear that there is a relation between these two models and approaches. This is probably due to the connection between Poincar\'e
and Hill, as testified in \cite{poincarehill}; their work takes the origin from celestial mechanics and, as we just saw, it applies as
well to suspension bridges. The results in \cite{argaz,bergaz} lead to the very same conclusion: if vertical oscillations are small enough then small
initial torsional oscillations remain small for all the time, whereas if vertical oscillations are large then small torsional oscillations can suddenly
become wider. This gives an answer to a long-standing question raised by the Tacoma Narrows Bridge collapse, see \cite{ammann,scott}, namely how can destructive torsional oscillations suddenly appear in a vertically oscillating bridge. The main core in both \cite{argaz,bergaz} is the
stability analysis of vertical modes, that is, how can a bridge oscillating as an almost pure vertical mode suddenly transfer part of the energy to a
torsional mode. We investigate this phenomenon by considering a class of second order Hamiltonian systems such as
\neweq{hamiltonian}
\ddot{y_{i}}+\lambda_i^2y_i+U_{y_i}(Y)=0\, ,\qquad Y=(y_1,\ldots,y_n)\in\mathbb{R}^n
\endeq
for some $n\ge2$, $\lambda_i>0$ and some potential $U\in C^1(\R^n,\R),$ where $U_{y_i}$ denotes the partial derivative of $U$ with respect to $y_i$. The heart of the matter is to study the evolution of the solutions of
\eq{hamiltonian} satisfying the initial conditions
\neweq{bigsmall}
y_1(0)=\zeta_0\, ,\ \dot{y}_1(0)=\zeta_1\, ,\qquad \sum_{i=2}^n(|y_i(0)|+|\dot{y}_i(0)|)\ll|\zeta_0|+|\zeta_1|
\endeq
for $\zeta_0,\zeta_1\in\R$; due to these uneven boundary conditions, we call $y_1$ the {\it dominating mode} and $y_i$ (for $i=2,...,n$) the
{\it residual modes}. And the main question is to establish if the unique solution $Y=Y(t)$ of \eq{hamiltonian}-\eq{bigsmall} has small residual modes
for every time $t>0$. It was shown in \cite{argaz,bergaz} that this is true provided that $|\zeta_0|+|\zeta_1|$ is sufficiently small whereas it may become false
if $|\zeta_0|+|\zeta_1|$ is sufficiently large. The typical pictures describing the instability of \eq{hamiltonian}-\eq{bigsmall} are as in
Figure \ref{typical}.

\begin{figure}[ht]
\begin{center}
{\includegraphics[height=33mm, width=60mm]{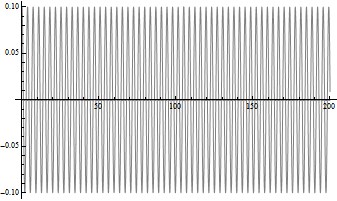}}\qquad{\includegraphics[height=33mm, width=60mm]{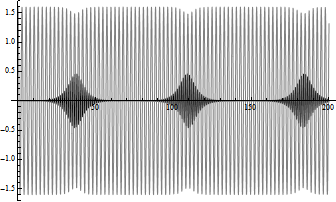}}
\caption{Stable and unstable oscillations.}\label{typical}
\end{center}
\end{figure}
\par
In both pictures, the gray oscillations represent the dominating mode whereas the black oscillations represent the largest component among the residual
modes. In the left picture, the initial data $\zeta_0$ and $\zeta_1$ in \eq{bigsmall} are small and the only large component of $Y$ is $y_1$
for all $t>0$, no black oscillations are visible. In the right picture, the initial data $\zeta_0$ and $\zeta_1$ in \eq{bigsmall} are larger and one
may see a large oscillation also in one of the residual modes: this mode suddenly grows up by capturing some energy from $y_1$ which
decreases its amplitude of oscillation when the transfer of energy occurs. This is what we call instability of $y_1$ (for large energies) and it can
be seen for many different forms of the potential $U$ in \eq{hamiltonian}, see \cite{argaz}.\par
A question was left open in \cite{argaz,bergaz}: which residual mode first captures the energy of $y_1$? Moreover, which is the criterion
governing the transfer of energy? The relevance of these questions relies on the possibility to understand which kind of oscillating mode will
first appear in the bridge when enough energy is inside the structure. In particular, this could help to prevent the appearance of the destructive
torsional oscillations. The main purpose of this paper is to give a sound answer to these questions.\par
We consider a simple prototype problem like \eq{hamiltonian}-\eq{bigsmall} with $n=3$. We choose a potential $U$ in such a way that the linearized problem
becomes a system of Mathieu equations \cite{mathieu}, which are a particular case of the Hill equations. The advantage of this choice is that much more precise
information is known on the behavior of the stability regions. Exploiting this fact we give a detailed explanation of how the stability
is lost for the dominating mode of \eq{hamiltonian} and which residual mode first captures its energy. Notice that by \cite{argaz} we know that several different choices of $U$ yield a similar response in the bridge. \par
The paper is organized as follows. In Section \ref{s2} we state the theoretical criterion governing the energy transfer between modes and in Section \ref{somenumres} we collect several numerical experiments which confirm and illustrate the theoretical results. In Section \ref{different} we discuss different choices of the potential $U$. Finally, in Section \ref{mec} we give a mechanical interpretation of the numerical results and we suggest some structural remedies to prevent instability in suspension bridges.

\section{Main results: energy dependent stability}\label{s2}
For $\mu$, $\lambda_1$, $\lambda_2$ being positive real numbers, $x_0\in\R\setminus\{0\}$ and $\eps>0$, we consider the following problem
\begin{equation}\label{PG}
\begin{cases}
\ddot{y}+\mu^2 y + U_{y}(y,z_1,z_2)=0   &\ y(0)=x_0, \ \dot{y}(0)=0\\
\ddot{z}_1+\lambda_1^2 z_1+ U_{z_1}(y,z_1,z_2)=0 &\ z_1(0)=\eps x_0, \ \dot{z}_1(0)=0\\
\ddot{z}_2+\lambda_2^2 z_2+ U_{z_2}(y,z_1,z_2)=0 &\ z_2(0)=\eps x_0, \ \dot{z}_2(0)=0\,,\\
\end{cases}
\end{equation}
where $U:\R^3\to\R$ is a non-negative, differentiable function with locally Lipschitz derivatives, and $U_{y}$, $U_{z_i}$ stand for its partial derivatives
with respect to $y$ and $z_i$, respectively.
The conserved total energy of \eq{PG} is given by
\begin{equation}\label{energyPG}
E:=\frac{\dot{y}^2}{2}+\frac{\dot{z_1}^2}{2}+\frac{\dot{z_2}^2}{2}+\frac{\mu^2}{2}y^2+\frac{\lambda_1^2}{2} z_1^2+\frac{\lambda_2^2}{2} z_2^2+U(y,z_1,z_2)\,.
\end{equation}
\par Along the paper, we mainly deal with the potential
\neweq{quadratic}
U(y,z_1,z_2)=\frac{y^2 z_1^2+y^2z_2^2+z_1^2z_2^2}{2}\,,
\endeq
see Section \ref{different} for a discussion about different choices. With the potential $U$ as in \eq{quadratic}, system \eq{PG} becomes
\neweq{hamilton2}
\left\{\begin{array}{ll}
\ddot{y}+\mu^2 y+(z_1^2+z_2^2)y=0\quad &\ y(0)=x_0, \ \dot{y}(0)=0\\
\ddot{z}_1+\lambda_1^2 z_1+(y^2+z_2^2)z_1=0\quad &\ z_1(0)=\eps x_0, \ \dot{z}_1(0)=0\\
\ddot{z}_2+\lambda_2^2 z_2+(y^2+z_1^2)z_2=0\quad &\ z_2(0)=\eps x_0, \ \dot{z}_2(0)=0\,.
\end{array}\right.
\endeq

If in \eq{hamilton2} we take $\eps=0$ (and $x_0\neq0$), then its unique solution satisfies
$z_1\equiv z_2\equiv0$, while $y$ solves $\ddot{y}+\mu^2 y=0$. Notice that, up to a time translation, any initial condition $(y(0),\dot{y}(0))\neq(0,0)$, yields the same solution as $(y(0),\dot{y}(0))=(x_0,0)$ for some $x_0$. Therefore, for $\eps=0$ (and $x_0\neq0$), system \eq{hamilton2} admits the unique solution $(\bar y,0,0)=(x_0\cos(\mu t),0,0)$ and the conserved energy
\begin{equation}\label{E-lin}
E:=\frac{\dot{y}^2}{2} + \frac{\mu^2}{2}y^2 = \frac{\mu^2}{2}x_0^2.
\end{equation}
Since our aim is to study the behavior of solutions for small $\eps$, we linearize the $z_i$ equations of \eq{hamilton2} around this solution and we obtain the following system of Mathieu equations \cite{mc}
\begin{equation}\label{Mathieu system}
\begin{cases}
\ddot{\xi}_1+\left(\lambda_1^2+\frac{x_0^2}{2} +\frac{x_0^2}{2}  \cos(2 \mu t)\right) \xi_1=0 &\\
\ddot{\xi}_2+\left(\lambda_2^2+\frac{x_0^2}{2} +\frac{x_0^2}{2}  \cos(2 \mu t)\right) \xi_2=0\,. &\\
\end{cases}
\end{equation}
By a change of variables (without renaming the $\xi_i$'s), we may rewrite the equations in \eq{Mathieu system}
in the canonical form:

\begin{equation}\label{Mathieu canon}
\displaystyle{\ddot{\xi_i}+\left( \alpha_i+2q_i \cos(2 t) \right) \xi_i=0}\,, \quad  \text{for } i=1,2\,,
\end{equation}
with
\neweq{retteparametriche}
\alpha_i(x_0)=\frac{2\lambda_i^2+x_0^2}{2\mu^2} \quad \text{and} \quad q_i(x_0)=q(x_0)=\frac{x_0^2}{4\mu^2}\ \quad \text{for } i=1,2\,,
\endeq
so that
\neweq{rette}
\alpha_i(q)=\frac{\lambda_i^2}{\mu^2}+2q\,.
\endeq

Let us explain what we mean by stability for system \eq{hamilton2}.

\begin{definition}\label{def:stable}
The solution $(\bar y,0,0)=(x_0\cos(\mu t),0,0)$ to system \eqref{hamilton2} for $\eps=0$ is said to be stable if the trivial solutions $\xi_i\equiv0$ ($i=1,2$) of
\eqref{Mathieu canon} are both stable. In the other cases, $(\bar y,0,0)$ is said to be unstable.
\end{definition}

Note that the two equations in \eqref{Mathieu canon} are uncoupled and therefore the trivial solution $(\xi_1,\xi_2)=(0,0)$ is stable if and only if
both the trivial solutions $\xi_1=0$ and $\xi_2=0$ of each equation in \eqref{Mathieu canon} are stable. The numerical results described in Section \ref{somenumres} confirm that this definition is well suited to characterize the instability.
As we shall see, the stability of $(\bar y,0,0)$ depends on its energy \eq{E-lin}. Therefore, the following definition will be useful.

\begin{definition}\label{activating energies}
We say that the energy $E$ in \eqref{E-lin} is activating for the residual mode $z_i$ ($i=1$ or $i=2$) of system \eqref{hamilton2} if the trivial
solution $\xi_i\equiv 0$ of the Mathieu equation \eqref{Mathieu canon} is unstable. Otherwise, we say that it is non-activating.
\end{definition}

We may now state and prove the following stability result.

\begin{theorem}\label{stability}
Let $\mu,\lambda_1,\lambda_2>0$ and $x_0\in\R\setminus\{0\}$. Let $E>0$ be the energy \eqref{E-lin} associated to the solution
$(\bar y,0,0)=(x_0\cos(\mu t),0,0)$ to system \eqref{hamilton2} for $\eps=0$. For each $i=1,2$ there exists an increasing divergent sequence $\{E^i_m\}_{m=0}^\infty$ such that $E^i_0=0$ and
\par (i) $E$ is non-activating whenever $E\in(E_{2k}^i,E_{2k+1}^i)$ for some $k\ge0$;
\par (ii) $E$ is activating whenever $E\in(E_{2k+1}^i,E_{2k+2}^i)$ for some $k\ge0$.
\end{theorem}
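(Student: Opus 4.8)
The plan is to reduce the statement entirely to the classical structure of the stability/instability regions (the Arnold tongues) of the canonical Mathieu equation \eqref{Mathieu canon}, and then to pull this structure back to the energy variable $E$ through the explicit affine relations \eqref{retteparametriche}--\eqref{rette}. The key classical fact I would invoke is the following: in the $(\alpha,q)$-plane, for each fixed $q$ (or more usefully along a fixed ray), the trivial solution of $\ddot{\xi}+(\alpha+2q\cos(2t))\xi=0$ is stable precisely when $(\alpha,q)$ lies in one of the stability regions, and these regions are separated from the instability tongues by the characteristic curves of Mathieu's equation, namely the $2\pi$-periodic curves $a_n(q)$ and the antiperiodic curves $b_n(q)$. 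These characteristic curves are real-analytic, do not intersect for $q\neq 0$, and order themselves as $a_0<b_1<a_1<b_2<a_2<\dots$; along any fixed value of $q$ the stability intervals and instability tongues therefore alternate. This alternation is exactly what will produce the even/odd indexing in (i)--(ii).

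First I would fix $i\in\{1,2\}$ and observe that as $x_0^2$ (equivalently $E=\tfrac{\mu^2}{2}x_0^2$) ranges over $(0,\infty)$, the point $(\alpha_i(x_0),q(x_0))$ traces out a half-line in the parameter plane. Indeed, by \eqref{retteparametriche} we have $q=\tfrac{x_0^2}{4\mu^2}$ and $\alpha_i=\tfrac{\lambda_i^2}{\mu^2}+2q$, so the trajectory is the ray \eqref{rette} emanating from the point $(\lambda_i^2/\mu^2,0)$ with slope $2$ in the $(q,\alpha)$ variables, parametrized monotonically by $E$. The second step is to intersect this ray with the Mathieu characteristic curves. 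Because the ray is transversal to the foliation by stability regions and the characteristic curves are analytic and properly embedded, the intersection is a discrete set; I would order the resulting $E$-values and call them $E^i_1<E^i_2<E^i_3<\dots$, setting $E^i_0=0$ to correspond to the starting point $(\lambda_i^2/\mu^2,0)$ on the $q=0$ axis, which lies in the first (large) stability region. Each crossing of a characteristic curve toggles stability, so the intervals between consecutive $E^i_m$ alternate between non-activating and activating; since $E^i_0=0$ begins in a stability interval, the even-indexed intervals $(E^i_{2k},E^i_{2k+1})$ are non-activating and the odd-indexed ones $(E^i_{2k+1},E^i_{2k+2})$ are activating, which is precisely (i) and (ii).

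Two points require care and constitute the main obstacle. The first is to justify that the sequence is genuinely infinite, increasing, and divergent: this follows because the ray is unbounded and the characteristic curves $a_n(q),b_n(q)$ grow like $-2q$ is \emph{not} the issue — rather, each tongue has an apex on the $q=0$ axis at the integer squares $\alpha=n^2$ with $n\ge 1$, and asymptotically the tongues widen but remain separated, so a ray of slope $2$ must cross infinitely many of them, giving divergence of $\{E^i_m\}$. I would make this quantitative by recalling the known asymptotics of the tongue boundaries for large $q$, which guarantee that the slope-$2$ ray cannot eventually run inside a single region. The second, subtler point is that the ray could in principle pass through a \emph{tangency} of the two boundary curves bounding a degenerate (zero-width) tongue, but for Mathieu's equation the instability tongues all have positive width for $q\neq 0$, so no degeneracy occurs and each crossing is a genuine, transversal stability transition. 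I would therefore close the argument by combining the transversality of \eqref{rette} with the non-degeneracy and analyticity of the Mathieu characteristic curves to conclude that the alternating structure described in (i)--(ii) holds with a strictly increasing divergent sequence, which completes the proof. $\Box$
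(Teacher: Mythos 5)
Your proposal follows essentially the same route as the paper's own proof: parametrize the line \eqref{rette} by the energy via \eqref{nuovaenergia}, and use the alternating stability/instability regions of the Mathieu diagram together with the large-$q$ asymptotics $a_n(q),b_n(q)\sim-2q$ to force infinitely many transversal crossings and hence a divergent alternating sequence $\{E^i_m\}$. The only detail the paper treats that you gloss over is the degenerate starting position $\lambda_i^2/\mu^2=n^2$ (the apex of a tongue), which is resolved there by combining the slope-$2$ property of \eqref{rette} with the small-$q$ expansions \eqref{qzero}.
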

\begin{proof} 
We first recall that, given $q >0$, the Mathieu equation
$$
\ddot{w}+\left(a+2q \cos(2 t)\right) w=0
$$
admits solutions which are either $\pi$ or $2\pi$-periodic only if $a$ belongs to the countably infinite sets of the so-called Mathieu characteristic values $\{a_n(q)\}_{n\geq 0}$ and $\{b_n(q)\}_{n\geq 1}$, see \cite{AbSt,mc,verh}. The characteristic curves do not intersect, that is, we have
\neweq{ordinate}
a_0(q)<b_1(q)<a_1(q)<\dots<b_n(q)<a_n(q)<b_{n+1}(q)<\dots\qquad\forall n\ge2\, .
\endeq
Moreover, their asymptotic behavior for large $q$ is
\neweq{qinfty}
a_n(q)\sim-2q\, ,\quad b_n(q)\sim-2q\qquad\mbox{as }q\to\infty\, ,
\endeq
while for small $q$ we have
\neweq{qzero}
\left\{\begin{array}{ll}
a_0(q)=o(q)\,,\quad b_1(q)=1-q+o(q)\,,\quad a_1(q)=1+q+o(q)\,,\\
b_n(q)=n^2+o(q)\ \mbox{ and }\ a_n(q)=n^2+o(q)\quad \forall n\ge2\,,
\end{array}\right.\qquad\mbox{as }q\to0\,,
\endeq
see \cite[Sections 2.151 and 12.30]{mc}.\par
The characteristic curves $a_n(q)$ and $b_n(q)$ divide the $(q,a)$-plane into stable and unstable regions, see the left picture in Figure \ref{calcoloenergia}, where the red lines correspond to the characteristic curves.
Denote with $S_n$ ($n\geq 0$) the stability (white) regions and with $U_n$ ($n\geq 1$) the instability (gray) regions. For $n\geq 0$ we have
$$S_n:=\{(q,a): q>0\,, a_n(q)<a<b_{n+1}(q)\},$$
while for $n\geq 1$ we have
$$U_n:=\{(q,a): q>0\,, b_n(q)<a<a_{n}(q)\}.$$
\begin{figure}[ht]
\begin{center}
{\includegraphics[height=60mm, width=120mm]{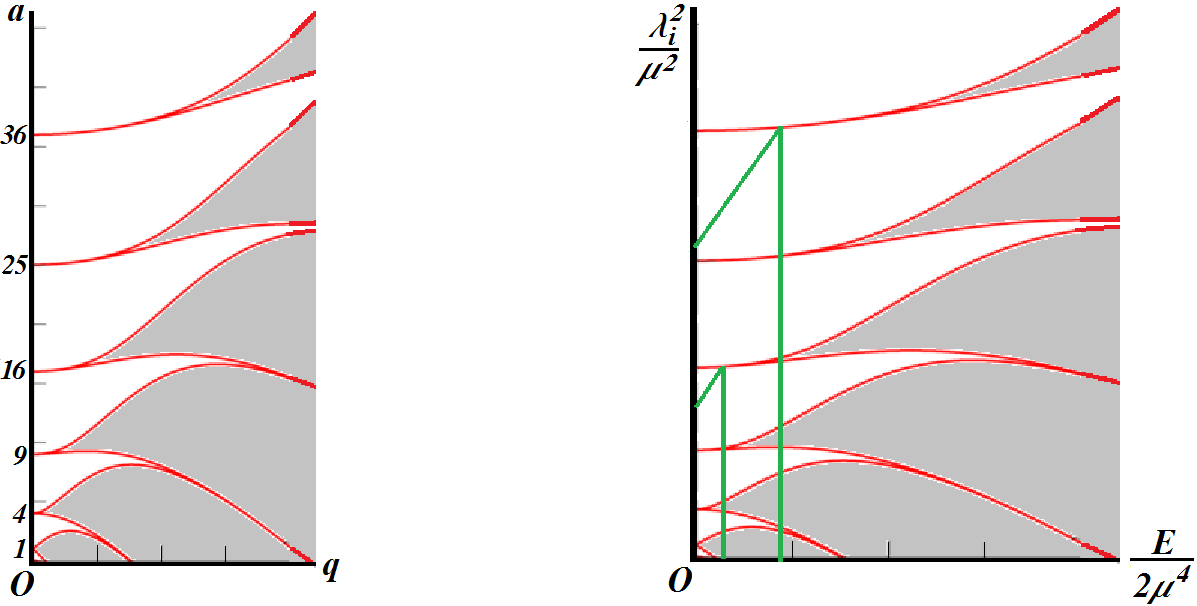}}
\caption{On the left the Mathieu diagram, on the right how to compute the energy threshold. The instability regions are gray.}\label{calcoloenergia}
\end{center}
\end{figure}

To each couple $(\mu,\lambda_i)$ in system \eq{hamilton2} we associate the sequence of energies $\{E^i_m\}_{m=0}^\infty$ as follows. The energy associated to $(\bar{y},0,0)$ satisfies
\eq{E-lin}, that is
\neweq{nuovaenergia}
E=\frac{\mu^2 x_0^2}{2} = 2\mu^4 q\,,
\endeq
where the second equality is due to \eq{retteparametriche}. Hence, as $E$ increases from $E=0$ to $E=\infty$ the parameters $(q(x_0),\alpha_i(x_0))$ in \eq{Mathieu canon} move
along the line \eq{rette} in the $(q,a)$-plane, according to the law \eq{retteparametriche}. We need to study the intersections of these lines with $S_n$ and $U_n$.\par
If $\alpha_i(0)=\lambda_i^2/\mu^2\in(n^2,(n+1)^2)\equiv(a_n(0),b_{n+1}(0))$ for some $n=0,1,2,\dots$ then, since all the functions involved are continuous, there
exists $E_1^i>0$ such that $a_n(q)<\alpha_i(q)<b_{n+1}(q)$ for all $E<E_1^i$, that is, for all $q>0$ sufficiently small in view of \eq{nuovaenergia}.
If $\alpha_i(0)=\lambda_i^2/\mu^2=n^2=a_n(0)<(n+1)^2=b_{n+1}(0)$ for some $n=1,2,\dots$ then, since the lines \eq{rette} have slope 2 and since \eq{qzero} holds,
we conclude again that $a_n(q)<\alpha_i(q)<b_{n+1}(q)$ for all $E>0$ sufficiently small. Therefore,
\neweq{firstinterval}
\exists n\in\N\, ,\quad \exists E_1^i>0\ \mbox{ s.t. }\ (q,\alpha_i(q))\in S_n\qquad\forall E<E_1^i\, .
\endeq
The largest possible value of $E_1^i$ may be determined as follows: one finds the abscissa $q$ of the intersection between $\alpha_i(q)$ and $b_{n+1}(q)$ where $n$ is as in \eq{firstinterval} (see the
corner of the green line in the right picture of Figure \ref{calcoloenergia}), then one computes $E_1^i$ according to \eq{nuovaenergia}. The asymptotic
estimate \eq{qinfty} ensures that $E_1^i<\infty$.\par
By \eq{ordinate}-\eq{qinfty}-\eq{qzero} we infer that the straight line \eq{rette} intersects at least once each characteristic curve $a_n$ and $b_n$ provided that
$n>\lambda_i/\mu$; moreover, at each crossing, the line moves from some $U_n$ to $S_n$ or from some $S_n$ to $U_{n+1}$, thereby alternating its intersection
with gray and white regions in Figure \ref{calcoloenergia}. Since the stability of the trivial solution $\xi_i\equiv0$ of \eq{Mathieu canon} depends on the
position of $(q,\alpha_i)$ in the Mathieu diagram, this completes the proof of the theorem.\end{proof}

Theorem \ref{stability} states, in particular, that the first energy interval $(0,E_1^i)$ is non-activating for both $i=1,2$. We may rephrase this property
as follows.

\begin{corollary}\label{thm:stability}
For every triple of real positive parameters $(\mu,\lambda_1,\lambda_2)$ there exists an energy $\bar{E}=\bar{E}(\mu,\lambda_1,\lambda_2)>0$ such that the
solution $(\bar y,0,0)$ to system \eqref{hamilton2} for $\eps=0$ is stable provided that its conserved energy $E$ defined in \eqref{E-lin} satisfies $E\leq \bar{E}$.
\end{corollary}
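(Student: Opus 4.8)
The plan is to derive Corollary~\ref{thm:stability} directly from Theorem~\ref{stability}, which already carries all the substantive work; what remains is to combine the two residual modes through the stability notion of Definition~\ref{def:stable}. First I would invoke part (i) of Theorem~\ref{stability} with $k=0$ separately for each residual index $i=1,2$. This yields, for each $i$, that the energy $E$ is non-activating for the mode $z_i$ precisely on the first interval $(E_0^i,E_1^i)=(0,E_1^i)$, where $E_1^i>0$ depends on the pair $(\mu,\lambda_i)$. By Definition~\ref{activating energies}, non-activation of $E$ for the mode $z_i$ is nothing but stability of the trivial solution $\xi_i\equiv0$ of the $i$-th Mathieu equation in \eqref{Mathieu canon}. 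Hence the two first intervals encode exactly the stability of the two uncoupled Mathieu equations.

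The second step is to merge these two conditions. Recall that, by Definition~\ref{def:stable} together with the fact that the equations in \eqref{Mathieu canon} are uncoupled, the solution $(\bar y,0,0)$ is stable if and only if both trivial solutions $\xi_1\equiv0$ and $\xi_2\equiv0$ are stable. I would therefore set
\neweq{barE}
\bar E:=\tfrac12\,\min\{E_1^1,E_1^2\}>0\, ,
\endeq
which is strictly positive since $E_1^1,E_1^2>0$, and which depends only on the triple $(\mu,\lambda_1,\lambda_2)$. For any energy $E$ with $0<E\le\bar E$ one then has simultaneously $E<E_1^1$ and $E<E_1^2$, so both $\xi_1\equiv0$ and $\xi_2\equiv0$ are stable, and Definition~\ref{def:stable} gives at once the stability of $(\bar y,0,0)$.

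There is essentially no deep obstacle here, since the mechanism is already established in the theorem; the only point requiring care is the endpoint. The value $E_1^i$ corresponds, via \eqref{nuovaenergia}, to the energy at which the parametric line \eqref{rette} first meets the characteristic curve $b_{n+1}$ bounding the stability region $S_n$ (with $n$ depending on $i$), i.e. a point lying on a Mathieu characteristic curve, where the corresponding Mathieu equation generically admits an unbounded (linearly growing) solution and is therefore \emph{not} stable. This is why I deliberately take $\bar E$ strictly below $\min\{E_1^1,E_1^2\}$ in \eqref{barE} rather than equal to it, so that the closed condition $E\le\bar E$ still places $(q,\alpha_i(q))$ strictly inside $S_n$ for both $i=1,2$. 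Any other choice $\bar E\in\bigl(0,\min\{E_1^1,E_1^2\}\bigr)$ would serve equally well, and the dependence $\bar E=\bar E(\mu,\lambda_1,\lambda_2)$ is exactly the one claimed.
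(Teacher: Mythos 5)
Your proposal is correct and follows essentially the same route as the paper, which presents the corollary as an immediate rephrasing of the fact that the first energy interval $(0,E_1^i)$ from Theorem \ref{stability} is non-activating for both $i=1,2$, so that taking $\bar E$ below $\min\{E_1^1,E_1^2\}$ suffices. Your extra remark on avoiding the endpoint (where the parameters hit a characteristic curve) is a sensible refinement of the same argument, not a different approach.
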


By combining Theorem \ref{stability} with Definition \ref{activating energies} we obtain the following theoretical criterion to determine which residual
mode captures the energy of the dominating mode $y$:

\begin{corollary}\label{beyond stability}
Let $E>0$ be the energy \eqref{E-lin} of system \eqref{hamilton2} associated to the solution $(\bar{y},0,0)$ for $\eps=0$. If $E\in (E^i_{2k+1},E^i_{2k+2})$ for some $k\ge0$
and for $i=1$ or $i=2$, then the residual mode $z_i$ captures the energy of the dominating mode $y$.
\end{corollary}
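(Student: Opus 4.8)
The plan is to obtain the statement as an immediate combination of Theorem~\ref{stability} with Definition~\ref{activating energies}, the physical content being supplied by the conservation of the total energy \eqref{energyPG}.

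First I would apply Theorem~\ref{stability}(ii): the hypothesis $E\in(E^i_{2k+1},E^i_{2k+2})$ for some $k\ge0$ says exactly that $E$ is an activating energy for the residual mode $z_i$. By Definition~\ref{activating energies} this is equivalent to the instability of the trivial solution $\xi_i\equiv0$ of the Mathieu equation \eqref{Mathieu canon}, i.e.\ to the fact that the point $(q(x_0),\alpha_i(x_0))$ travelling along the line \eqref{rette} lies in one of the instability regions $U_n$ identified in the proof of Theorem~\ref{stability}.

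Next I would convert this instability into effective growth of the residual mode. By Floquet theory for the Mathieu equation (see \cite{mc,verh}), in the instability regions the characteristic multipliers are real with modulus different from one, so there exist solutions $\xi_i$ that grow without bound; the initial datum $\dot\xi_i(0)=0$, $\xi_i(0)\neq0$ excites this growing component. Since for small $\eps$ the $z_i$ equation of \eqref{hamilton2} linearizes precisely to the $\xi_i$ equation \eqref{Mathieu system}--\eqref{Mathieu canon} about the solution $(\bar y,0,0)$, the residual mode $z_i$ started from $z_i(0)=\eps x_0$ inherits this departure from the origin.

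Finally I would close via energy conservation. The quantity $E$ in \eqref{energyPG} is constant along solutions of \eqref{hamilton2} and, for $\eps=0$, is stored entirely in the dominating mode $y$ as recorded in \eqref{E-lin}; hence any increase of the energy carried by $z_i$ is necessarily balanced by a decrease of that carried by $y$, which is precisely the meaning of ``$z_i$ captures the energy of $y$''. The delicate point is this last passage: the linearization controls $z_i$ only for small $\eps$ and on a finite time scale, so a fully rigorous nonlinear energy-transfer statement would require a persistence argument showing that the nonlinear residual mode shadows the unstable Mathieu solution until the exchange takes place. Here this gap is bridged by taking Definition~\ref{def:stable} as the operative notion of instability, whose adequacy is confirmed by the numerical experiments of Section~\ref{somenumres}.
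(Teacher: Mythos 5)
Your proposal matches the paper's own (one-line) argument: the corollary is obtained by combining Theorem~\ref{stability}(ii) with Definition~\ref{activating energies}, so that ``$z_i$ captures the energy'' is understood in the operative sense of instability of $\xi_i\equiv0$ for \eqref{Mathieu canon}, exactly as you conclude. Your additional Floquet and energy-conservation remarks, and your explicit acknowledgment that the nonlinear energy-transfer claim is only heuristically justified and numerically confirmed, are consistent with (indeed more candid than) the paper, which likewise defers that justification to Sections~\ref{somenumres} and~\ref{mec}.
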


As we shall see in Section \ref{somenumres} it may happen that both the residual modes capture the energy. Furthermore, the amount of captured energy
depends on how far is the point $(q,a)$ from the stability region. Therefore, the amplitude of the corresponding activating interval plays an
important role. In Section \ref{somenumres} we shall see that if it is sufficiently small 
then there is no ``visible'' activation, since the crossing through the unstable region
is ``too fast''.


\section{Numerical results}\label{somenumres}

We consider again system \eq{hamilton2}. For $\eps$ small, its conserved energy is given by
\neweq{Ex0}
E=\frac{1}{2}\left( \dot{y}^2+\dot{z_1}^2+\dot{z_2}^2+\mu^2 y^2+\lambda_1^2z_1^2+\lambda_2^2z_2^2+y^2z_1^2+y^2z_2^2+z_1^2z_2^2\right)
\approx\frac{\mu^2}{2}\,x_0^2\,.
\endeq

From the proof of Theorem \ref{stability} we learn that the activating intervals for the energy can be computed by determining for which values of $q$ the couple $(q,\alpha_i)$ in \eq{retteparametriche} lies in the instability regions $U_n$ with $n^2>\alpha_i(0)$, namely by intersecting the lines \eq{rette} with the characteristic curves of the Mathieu equations. A numerical approximation of the intersection points can be obtained with Mathematica, by using the functions
\begin{center}
{\tt MathieuCharacteristicA[n,x]} \ and \ {\tt MathieuCharacteristicB[n,x]}\,.
\end{center}
In turn, by \eq{retteparametriche}, this intersection yields the initial data $x_0$ for which the energy belongs to the activating intervals. In the experiments below we plot the solutions to \eq{hamilton2} for suitable choices of the parameters $\mu,\lambda_1, \lambda_2$ and for different values of the initial data $x_0$.

\subsection{Experiment 1}\label{exp1}
Fix $\mu^2=1$, $\lambda_1^2=0.1$, $\lambda_2^2=0.9$ and $\eps=10^{-3}$. By computing, as explained above, the intersection points of the characteristic curves of the Mathieu equations $b_1<a_1<b_2<a_2$ with the straight lines: 
\neweq{above}
(\ell_1)\quad a=0.1+2q\qquad\mbox{and}\qquad (\ell_2)\quad a=0.9+2q\, 
\endeq
and thanks to \eq{retteparametriche}, we obtain that the couple $(q,\alpha_1(q))$, as given in \eq{rette}, lies in the instability region $U_1$ (resp. $U_2$) if $x_0$ belongs to the interval $I_1^1=(1.1,1.8)$ (resp. $I_1^2=(2.69,3.44)$). For these choices of $x_0$ the energy \eq{Ex0} is activating for $z_1$. Similarly, if $x_0$ belongs to the interval $I_2^1=(0.36,0.63)$ (resp. $I_2^2=(2.42,2.99)$), then the couple $(q,\alpha_2(q))$ lies in the instability region $U_1$ (resp. $U_2$) and the corresponding energy \eq{Ex0} is activating for $z_2$. \par
With Mathematica we plot the graphs of the solution of \eq{hamilton2} on the interval of time $t\in[0,400]$ for varying $x_0$ (and, therefore, varying $E$) close to the intervals determined above. We varied $x_0$ from $x_0=0.1$ to $x_0=3$ with step $0.1$; we obtained plots of the residual modes $z_1$ and $z_2$ and we could see which
of the two modes (if any) captured the energy of the dominating mode $y$. We also plotted the graph of $y$ which is somehow less interesting since for small $t>0$ it essentially
looks like $y(t)\approx x_0\cos(\mu t)$ and is too large to allow to see the variations of the residual modes $z_i$. Since both the $z_i$ start
with amplitude of oscillations of the order of $10^{-3}$ (or even $10^{-4}$ for small $x_0$), we could detect their instability when their oscillations
increased in amplitude of at least one order of magnitude. In order not to plot too many pictures, we describe the obtained results with 15 graphs
from $x_0=0.2$ to $x_0=3$ with step $0.2$. All the graphs are complemented with comments.\par

\begin{figure}[ht]
\begin{center}
{{\includegraphics[height=27mm, width=50mm]{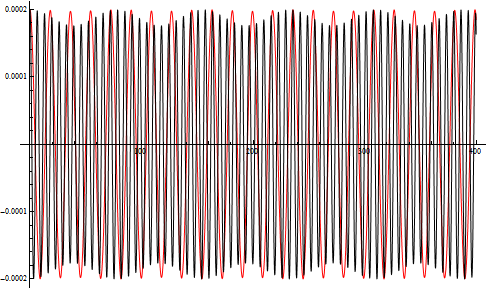}}\ {\includegraphics[height=27mm, width=50mm]{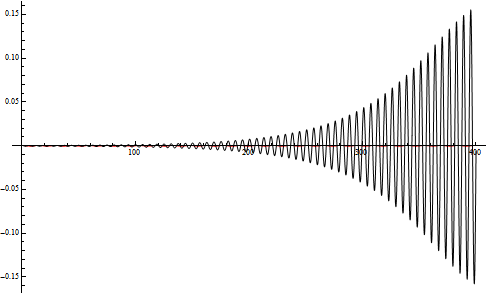}}\ {\includegraphics[height=27mm, width=50mm]{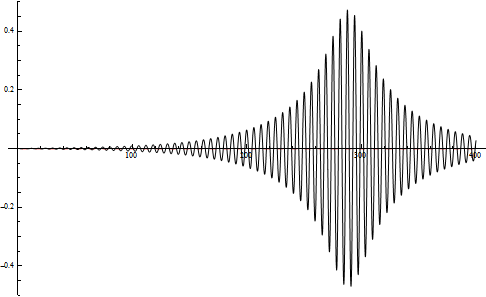}}}
\caption{Plots of $z_1$ (red) and $z_2$ (black) for $x_0\in\{0.2,0.4,0.6\}$ (left to right).}\label{primaterna}
\end{center}
\end{figure}

In Figure \ref{primaterna} we display the plots for $x_0\in\{0.2,0.4,0.6\}$. It is apparent that for $x_0=0.2$ both the residual modes remain small, nearly as their initial amplitude. It is however already visible that
$z_2$ (black) has somehow regular cycles of variable amplitude. For $x_0=0.4$ we only see $z_2$ which grows up to $\approx0.16\gg z_2(0)$ while $z_1$
is not visible because it remains of the order of $z_1(0)$; this picture shows that $z_2$ has captured some of the energy of $y$ whose amplitude has
decreased as in Figure \ref{typical}. The same phenomenon is accentuated for $x_0=0.6$ where it appears earlier in time and $z_2$ grows up
until $\approx0.43$.\par
Let us analyze these results with the aid of the theoretical results of Section \ref{s2}. We enlarge the diagram
of the instability curves of the Mathieu equations and, on the same graph, we plot the straight lines
$\ell_1$ and $\ell_2$ as defined in \eq{above}. Since we are in the region where $a\le1$, the obtained picture on the interval $q\in[0,1/4]$ is represented in Figure \ref{enlarged}.

\begin{figure}[ht]
\begin{center}
{\includegraphics[height=53mm, width=121mm]{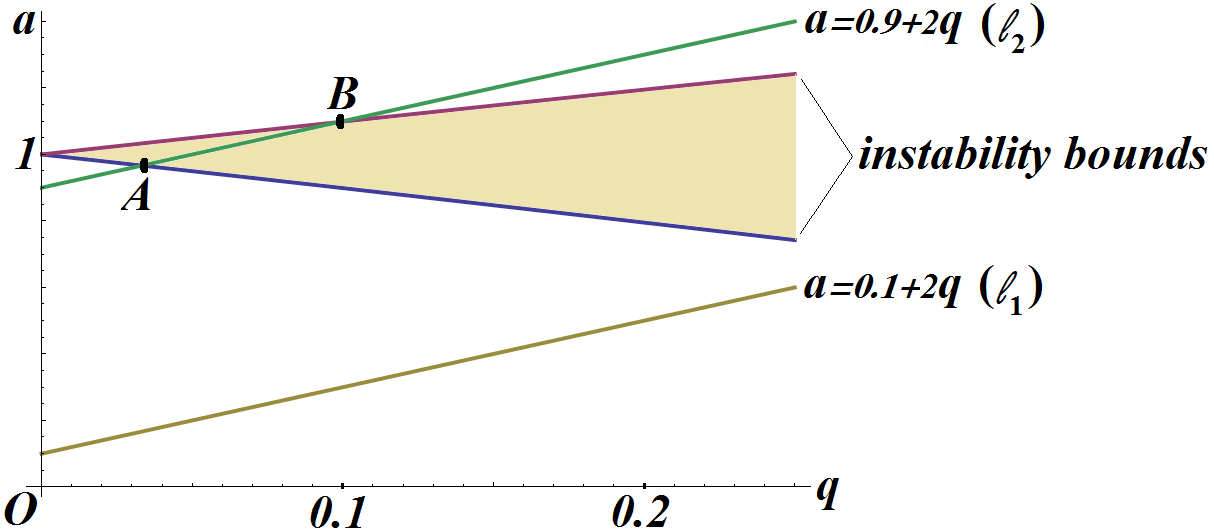}}
\caption{Intersections between the stability regions and the parametric lines (local view).}\label{enlarged}
\end{center}
\end{figure}
\noindent
Starting from $q=0$ (that is, $x_0=0$), the line $(\ell_2)$ is the first one which exits the (white) stability region. This happens at the point $A$
which, again computed with Mathematica, has the abscissa $q\approx0.033$ and therefore, in view of \eq{retteparametriche}, $x_0\approx0.36$, i.e. the left endpoint of the interval $I_2^1$. At this
amplitude of oscillation of $y$, in accordance with our theoretical results, we see that the residual mode $z_2$ starts capturing its energy. Figure \ref{primaterna} confirms that the transition occurs
for $0.2<x_0<0.4$. In view of \eq{Ex0}, the critical energy is $E\approx0.066$. \par
For larger $x_0$, that is $x_0\in\{0.8,1,1.2\}$, we obtained the plots in Figure \ref{secondaterna}.
\begin{figure}[ht]
\begin{center}
{{\includegraphics[height=27mm, width=50mm]{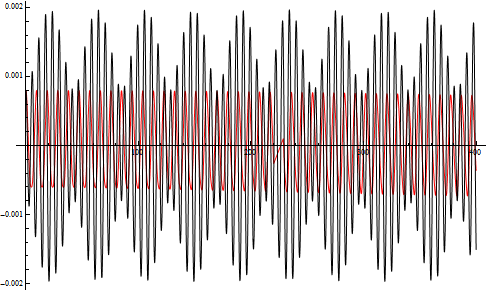}}\ {\includegraphics[height=27mm, width=50mm]{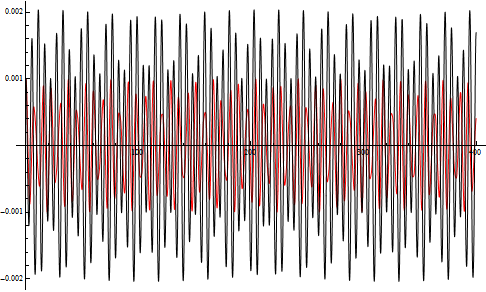}}\ {\includegraphics[height=27mm, width=50mm]{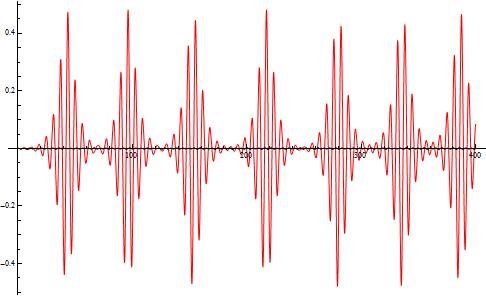}}}
\caption{Plots of $z_1$ (red) and $z_2$ (black) for $x_0\in\{0.8,1,1.2\}$ (left to right).}\label{secondaterna}
\end{center}
\end{figure}
In the first two pictures ($x_0\in\{0.8,1\}$) we see that none between $z_1$ and $z_2$ captures the energy of $y$, they essentially remain of the same order
of magnitude as the initial data. This means that the line $(\ell_1)$ has not yet entered in the instability region of the Mathieu diagram while $(\ell_2)$
has exited. Looking again at Figure \ref{enlarged}, we see that the latter fact occurs at the point $B$ corresponding to $q\approx0.099$ and therefore
to $x_0\approx0.63$, i.e. the right endpoint of the interval $I_2^1$. At this amplitude of oscillation of $y$, the residual mode $z_2$ stops capturing its energy. Figures \ref{primaterna} and
\ref{secondaterna} confirm that the transition occurs for $0.6<x_0<0.8$. Namely, the activating interval numerically observed is the one determined by the theoretical results. Moreover, Figure \ref{enlarged} also shows that $(\ell_1)$ has not yet
entered in the instability region: in order to see when this happens we have to take a larger view of the Mathieu diagram, see Figure \ref{general}.
\begin{figure}[ht]
\begin{center}
{\includegraphics[height=53mm, width=127mm]{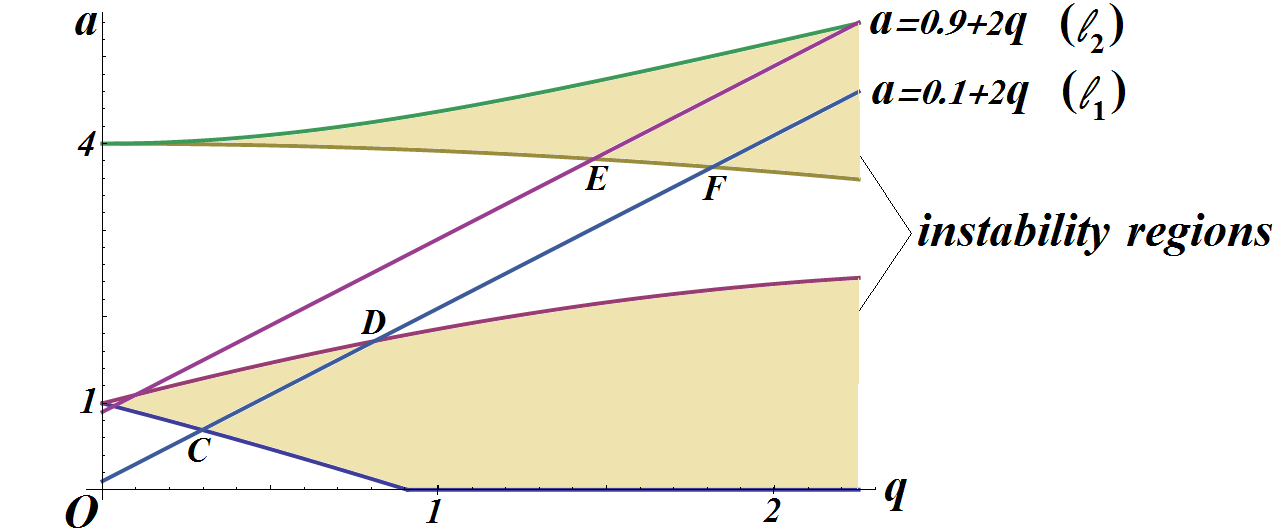}}
\caption{Intersections between the stability regions and the parametric lines (global view).}\label{general}
\end{center}
\end{figure}
In this picture we represent the diagram for $q\in[0,9/4]$ since $q=9/4$ corresponds to $x_0=3$; moreover, we do not place again the points $A$ and $B$
in order to have a more readable picture. The point where $(\ell_1)$ enters the instability region is $C$, see Figure \ref{general}: numerically, it
corresponds to $q\approx0.3$ and to $x_0\approx1.1$ (left endpoint of $I_1^1$). This explains why in Figure \ref{secondaterna}, case $x_0=1.2$, we see that $z_1$ enlarges and
captures the energy of the dominating mode $y$.\par
By increasing further $x_0$, that is, $x_0\in\{1.4,1.6,1.8\}$ we obtained the plots in Figure \ref{terzaterna}.

\begin{figure}[ht]
\begin{center}
{{\includegraphics[height=27mm, width=50mm]{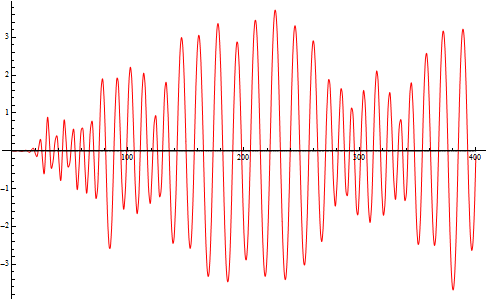}}\ {\includegraphics[height=27mm, width=50mm]{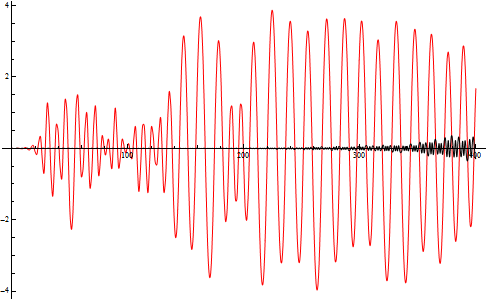}}\ {\includegraphics[height=27mm, width=50mm]{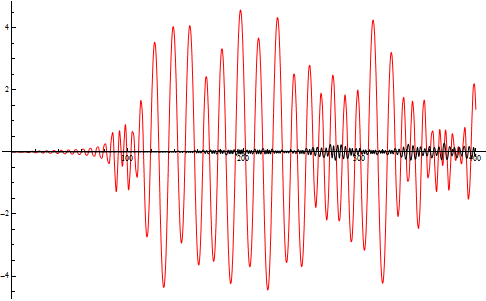}}}
\caption{Plots of $z_1$ (red) and $z_2$ (black) for $x_0\in\{1.4,1.6,1.8\}$ (left to right).}\label{terzaterna}
\end{center}
\end{figure}
\noindent
We see here that $z_1$ may become even larger than $x_0$, that is, of the initial amplitude of the dominating mode. From the energy conservation
we infer that this can happen only if $y$ is almost 0 when $|z_1|$ reaches its maximum. This shows that there has been a change of the frequencies
and that the period of $z_1$ is a multiple (possibly the same) of the period of $y$. For $x_0\in\{1.6,1.8\}$ we see that also $z_2$ increases its
amplitude after some (long)
interval of time. We believe that this happens because $z_2$ captures some energy from $z_1$; this would mean that the linearized problem has changed
and that different straight lines should be drawn on the Mathieu diagram. Therefore, this {\em does not} mean that $(\ell_2)$ has reached the point
$E$ in Figure \ref{general}.\par
For $x_0\in\{2,2.2,2.4\}$ we obtained the plots in Figure \ref{quartaterna}.

\begin{figure}[ht]
\begin{center}
{{\includegraphics[height=27mm, width=50mm]{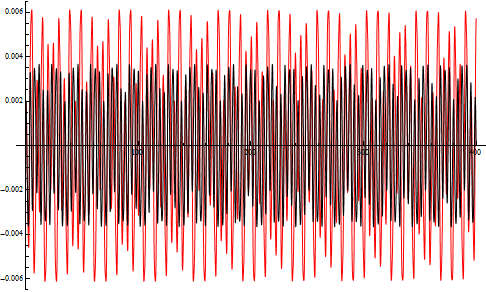}}\ {\includegraphics[height=27mm, width=50mm]{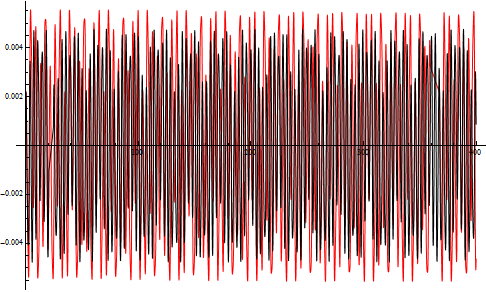}}\ {\includegraphics[height=27mm, width=50mm]{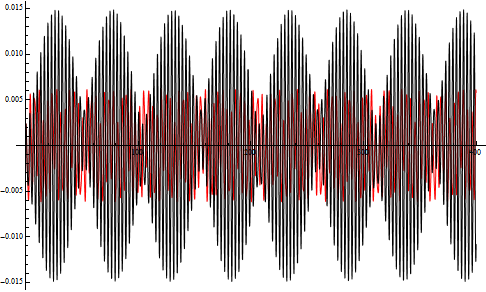}}}
\caption{Plots of $z_1$ (red) and $z_2$ (black) for $x_0\in\{2,2.2,2.4\}$ (left to right).}\label{quartaterna}
\end{center}
\end{figure}
\noindent
If $x_0\in\{2,2.2\}$ we see that no residual mode is capturing the energy of the dominating mode, both $z_1$ and $z_2$ have an amplitude of oscillation
of the order of $10^{-3}$. This means that the line $(\ell_1)$ has crossed the point $D$ which, numerically, is seen to occur for $q\approx0.81$ and to
$x_0\approx1.8$ (right endpoint of $I_1^1$). This fact is confirmed by a finer experiment performed for $x_0=1.81$: in this case, the picture looks like the left one in Figure
\ref{quartaterna}. If $x_0=2.4$, from Figure \ref{quartaterna} we see that $z_2$ starts to become larger, which means that the line $(\ell_2)$ is approaching
the point $E$ in Figure \ref{general}. And, indeed, we numerically found that the abscissa of $E$ is $q\approx1.46$ which corresponds to $x_0\approx2.42$ (left endpoint of $I_2^2$).\par
For $x_0\in\{2.6,2.8,3\}$ we obtained the plots in Figure \ref{quintaterna}.

\begin{figure}[ht]
\begin{center}
{{\includegraphics[height=27mm, width=50mm]{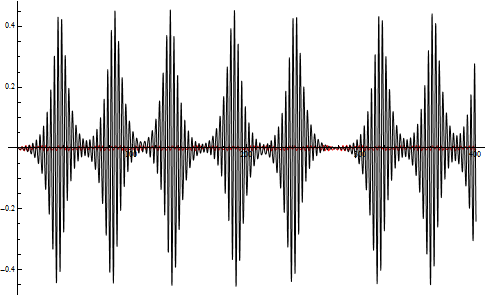}}\ {\includegraphics[height=27mm, width=50mm]{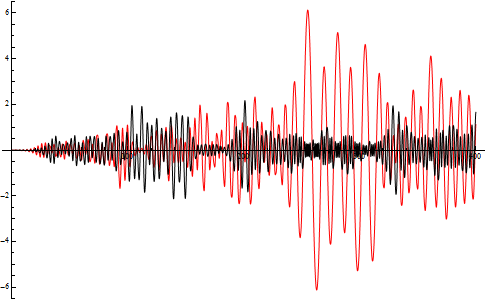}}\ {\includegraphics[height=27mm, width=50mm]{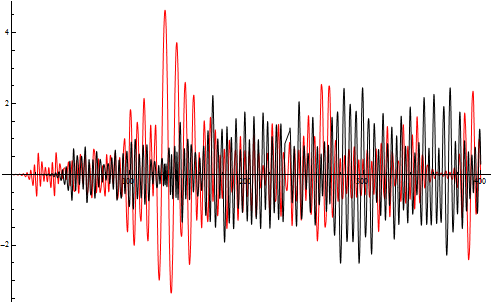}}}
\caption{Plots of $z_1$ (red) and $z_2$ (black) for $x_0\in\{2.6,2.8,3\}$ (left to right).}\label{quintaterna}
\end{center}
\end{figure}
\noindent
For $x_0=2.6$ the line $(\ell_2)$ is beyond $E$ and has entered in the second instability region, a fact which is clearly displayed by the left
picture in Figure \ref{quintaterna}. The point $F$ in Figure \ref{general} is the point where also $(\ell_1)$ enters in the second instability region:
its abscissa is $q=1.81$ corresponding to $x_0\approx2.69$ (left endpoint of $I_1^2$). And indeed, the plots for $x_0\in\{2.8,3\}$ essentially show a
chaotic behavior where both the residual modes capture the energy of the dominating mode.\par
The just described numerical results enable us to give a precise answer to the question raised in the title relatively to the particular second
order Hamiltonian system \eq{hamilton2}. 
\begin{quote}\textit{Which residual mode captures the energy of the dominating mode depends on the amplitude of oscillation
or, equivalently, on the amount of energy present within \eq{hamilton2}. }
\end{quote}
The response is summarized in the following table where RMCE means
{\em residual mode capturing the energy} and $x_0$ varies in the interval $[0,3]$.

\begin{table}[htbp]
\caption{residual mode capturing the energy (RMCE) when $\mu^2=1,$ $\lambda_1^2=0.9$ and $\lambda_2^2=0.1$.} 
\begin{center}
\begin{tabular}{|c|c|c|c|c|c|c|c|}
\hline
$x_0\in$ & $[0,0.36)$ & $I_2^1$ & $(0.63,1.1)$ & $I_1^1$ & $(1.8,2.42)$ & $I_2^2\setminus I_1^2$ & $I_2^2\cap I_1^2$ \\
\hline
RMCE & none & $z_2$ & none & $z_1$ & none & $z_2$ & both \\
\hline
\end{tabular}
\end{center}
\end{table}

\subsection{Experiment 2}

Consider system \eq{hamilton2} with $\mu=1$, $\lambda_1=2$, $\lambda_2=4$ and $\eps=10^{-3}$. We proceed as in Experiment 1. The straight lines \eq{rette} in this case are
$$(\ell_1)\quad a=4+2q\qquad\mbox{and}\qquad (\ell_2)\quad a=16+2q\,. $$

To determine the first two activating intervals for each of the residual modes, we first intersect $\ell_1$ with the characteristic curves $b_3<a_3<b_4<a_4$ and then $\ell_2$ with the characteristic curves $b_5<a_5<b_6<a_6$. With Mathematica and recalling \eq{retteparametriche}, we obtain that the couple $(q,\alpha_1(q))=(q,4+2q)$ lies in the instability region $U_3$ (resp. $U_4$) if $x_0$ belongs to the interval $I_1^1=(3.22,3.42)$ (resp. $I_1^2=(5.08,5.42)$). For these choices of $x_0$ the energy \eq{Ex0} is activating for $z_1$. The couple $(q,\alpha_2(q))=(q,16+2q)$ lies in the instability region $U_5$ (resp. $U_6$) if $x_0$ belongs to the interval $I_2^1=(4.349,4.357)$ (resp. $I_2^2=(6.58,6.614)$) and the energy \eq{Ex0} is activating for $z_2$. As in Section \ref{exp1}, we have plotted the graphs of $z_1$ and $z_2$ for many choices of $x_0$ both inside and outside the above intervals $I_i^j$. For $x_0$ entering in the intervals $I_1^1$ and $I_1^2$, the behavior of the solutions $z_1$ and $z_2$ is as in Figure \ref{primaterna} with $z_1$ and $z_2$ swapped. The amplitude of the oscillations of $z_1$ increases by a factor of 10 when crossing $I_1^1$ and by a factor of 4 when crossing $I_1^2$. If $x_0$ belongs to $I_2^1$ the energy transfer on $z_2$ cannot be noticed. The reason is the small amplitude of the interval $I_2^1$ (of order $< 10^{-2}$); in other words, for small energies $z_2$ appears more stable than $z_1$. Finally, if $x_0$ belongs to the interval $I_2^2$ the energy transfer on $z_2$ can hardly be noticed, since the amplitude of the oscillations of $z_2$ increases by a factor of 2 when crossing $I_2^2$. The results are summarized in Table 2.
\begin{table}[htbp]
\caption{residual mode capturing the energy (RMCE) when $\mu^2=1,$ $\lambda_1^2=4$ and $\lambda_2^2=16$.}
\begin{center}
\begin{tabular}{|c|c|c|c|c|c|c|c|c|}
\hline
$x_0\in$ & $[0,3.22)$ & $I_1^1$ & $(3.42,4.349)$ & $I_2^1$ & $(4.357,5.08)$ & $I_1^2$ & $(5.42,6.58)$&$I_2^2$ \\
\hline
RMCE & none & $z_1$ & none & none & none & $z_1$ & none & $z_2$/none \\
\hline
\end{tabular}
\end{center}
\end{table}

\subsection{Experiment 3}
We fix $\mu=\sqrt{2}/2$, $\lambda_1=2$, $\lambda_2=4$ and $\eps=10^{-2}$, namely we double the ratios $\frac{\lambda_i^2}{\mu^2}$ of Experiment 2. Here the straight lines \eq{rette} become
$$(\ell_1)\quad a=8+2q\qquad\mbox{and}\qquad (\ell_2)\quad a=32+2q\,. $$
With Mathematica, we intersect $\ell_1$ with the characteristic curves $b_3<a_3<b_4<a_4$ and $\ell_2$ with the characteristic curves $b_6<a_6<b_7<a_7$. By \eq{retteparametriche}, arguing as in the previous experiments, we obtain that if $x_0$ belongs to the intervals $I_1^1=(1.007,1.009)$ and $I_1^2=(2.915,2.969)$, then the energy \eq{Ex0} is activating for $z_1$. If $x_0$ belongs to the intervals $I_2^1=(2.01467,2.01468)$ and $I_2^2=(4.2233,4.2239)$, then the energy \eq{Ex0} is activating for $z_2$. The behavior of both $z_1$ and $z_2$ becomes much more stable and we have to wait until the second activating interval for $z_1$, namely $I_1^2$, to register the first significant energy transfer on a residual mode. Table 3 summarizes what we numerically observed.

\begin{table}[htbp]
\caption{Residual mode capturing the energy (RMCE) when $\mu^2=1/2,$ $\lambda_1^2=4$ and $\lambda_2^2=16$.} 
\begin{center}
\begin{tabular}{|c|c|c|c|c|c|c|c|c|}
\hline
$x_0\in$ & $[0,1.007)$ & $I_1^1$ & $(1.009,2.01467)$ & $I_2^1$ & $(2.01468,2.915)$ & $I_1^2$ & $(2.969,4.2233)$&$I_2^2$ \\
\hline
RMCE & none & none & none & none & none & $z_1$ & none & none \\
\hline
\end{tabular}
\end{center}
\end{table}

\subsection{Conclusions from the numerical results}
We performed further experiments which confirmed the just illustrated precise pattern. The lines \eq{rette} intersect alternatively the stability/instability regions giving rise to one of the above pictures. Furthermore, the observed activating intervals coincide with those expected from our theoretical results. Summarizing, we may draw the following conclusions.\par
$\bullet$ Which residual mode first captures the energy of the dominating mode depends on the {\em ratios} $\lambda_i/\mu$: these ratios determine the point
of the $a$-axis in the Mathieu diagram where the straight lines \eq{retteparametriche} start at zero energy.\par
$\bullet$ The energy threshold for instability is $\mu^2x_0^2/2$, see \eq{nuovaenergia}, and one can use Figure \ref{calcoloenergia} to compute it.\par
$\bullet$ The residual modes grow up earlier in time and wider in amplitude if $x_0$ is such that the corresponding parameters $(q,\alpha_i)$ in \eq{retteparametriche}
are far from the stability region, see the last two pictures in Figure \ref{primaterna}.\par
$\bullet$ When the quotient $\frac{\lambda_i^2}{\mu^2}$ increases, the residual modes display a very stable behavior. A theoretical explanation of this fact comes from the classical stability theory for the Mathieu equation. Indeed, it can be proved that for $a\gg q>0$, corresponding in our case to $\frac{\lambda_i^2}{\mu^2}$ large and $E$ small, the trivial solution of the Mathieu equation is stable, see \cite[Section 4.80]{mc}.\par
$\bullet$ If the amplitude of the activating interval for the energy of residual mode $z_i$ is small, then there is no ``visible'' activation, see Tables 2 and 3 and use \eq{Ex0} to obtain the response in terms of the energy.

\section{Different potentials}\label{different}

It is quite natural to wonder whether the results of the previous sections, in particular the numerical results of Section \ref{somenumres},
apply to different potentials $U$, other than \eq{quadratic}.\par
If we replace \eq{quadratic} with
$$U(y,z_1,z_2)=\frac{\gamma y^2 z_1^2+\beta y^2z_2^2+z_1^2z_2^2}{2}\quad \gamma, \beta >0\,,$$
then \eq{Mathieu system} becomes
$$
\begin{cases}
\ddot{\xi}_1+\left(\lambda_1^2+\frac{\gamma x_0^2}{2}+\frac{\gamma x_0^2}{2}\cos(2 \mu t)\right)\xi_1=0 &\\
\ddot{\xi}_2+\left(\lambda_2^2+\frac{\beta x_0^2}{2} +\frac{\beta x_0^2}{2} \cos(2 \mu t)\right)\xi_2=0\,. &\\
\end{cases}
$$
Whence, we still obtain Mathieu equations of the form \eq{Mathieu canon} but with
$$
\alpha_1=\frac{2\lambda_i^2+\gamma x_0^2}{2\mu^2} \quad \text{and} \quad q_1=\frac{\gamma x_0^2}{4\mu^2}\,, \quad \alpha_2=\frac{2\lambda_i^2+\beta x_0^2}{2\mu^2} \quad \text{and} \quad q_2=\frac{\beta x_0^2}{4\mu^2}\,.
$$
We note that in both the cases there holds $\alpha_i=\frac{\lambda_i^2}{\mu^2}+2q_i$. Hence, we have different parametrizations of the same parallel lines. In terms of our stability analysis the values of $\gamma$ and $\beta$ may be exploited to increase or decrease the energy threshold for the stability of the corresponding equations, see the proof of Theorem \ref{stability}. \par
More generally, let $U$ be a non-negative, differentiable function with locally Lipschitz derivatives such that $\nabla U(y,0,0)=(0,0,0)$ for all $y\in \R$. Then, all the above analysis holds and \eq{Mathieu system} becomes
\neweq{linsyst}
\begin{cases}
\ddot{\xi}_1+\left(\lambda_1^2+U_{z_1z_1}(x_0 \cos( \mu t),0,0)\right)\xi_1=0 &\\
\ddot{\xi}_2+\left(\lambda_2^2+U_{z_2z_2}(x_0 \cos( \mu t),0,0)\right)\xi_2=0. &\\
\end{cases}
\endeq
One may obtain different lines, other than \eq{retteparametriche}, for instance by taking non-polynomial potentials $U$, in which case Hill equations show up instead of the simpler Mathieu equations in \eq{Mathieu canon}. Then, the stability regions may have strange shapes (see  \cite{broer}) and it becomes more difficult to determine a precise criterion governing the energy transfer between modes.
\par\smallskip
Notice that if the potential $U=U(y,z_1,z_2)$ satisfies
\neweq{degenerate}
U_{z_1z_1}(y,0,0)=U_{z_2z_2}(y,0,0)=0\qquad\forall y\in\R\,,
\endeq
then the linearized problem \eq{linsyst} simply becomes
\neweq{constant}
\ddot{\xi}_1+\lambda_1^2\xi_1=0\ ,\quad \ddot{\xi}_2+\lambda_2^2\xi_2 =0
\endeq
and is therefore independent of $y$ and of its amplitude of oscillation. As an example, consider the potential
$$U(y,z_1,z_2)=\frac{y^4 z_1^4+y^4z_2^4+z_1^4z_2^4}{4}$$
so that \eq{PG} becomes
\neweq{PG4}
\left\{\begin{array}{lll}
\ddot{y}+\mu^2 y +(z_1^4+z_2^4)y^3=0   &\ y(0)=x_0, \ \dot{y}(0)=0\\
\ddot{z}_1+\lambda_1^2 z_1+(y^4+z_2^4)z_1^3=0 &\ z_1(0)=\eps x_0, \ \dot{z}_1(0)=0\\
\ddot{z}_2+\lambda_2^2 z_2+(y^4+z_1^4)z_2^3=0 &\ z_2(0)=\eps x_0, \ \dot{z}_2(0)=0\,.
\end{array}\right.
\endeq

In this case, the parametric equations \eq{retteparametriche} make no sense and the corresponding (green) lines in Figure~\ref{calcoloenergia} are
horizontal: this is why we call this case \textit{degenerate}. We have tried some numerical experiments; let us describe some of the results we obtained.\par

$\bullet$ If $\mu=\lambda_1=1$, $\lambda_2=2$ and $\eps=10^{-3}$, the system was extremely unstable. The residual mode $z_1$ started capturing the energy of $y$ even for small
values of $x_0$. With some fine experiments we could detect instability already for $x_0=0.5$, but we suspect the system to be unstable since the
very beginning. 
Completely similar results were obtained for other choices of $\lambda_2>\lambda_1=\mu$. And also the
case $\lambda_2=\lambda_1=\mu$ gave similar response with the addition (of course!) that both $z_1$ and $z_2$ captured the energy of $y$.\par
$\bullet$ If $\mu=1$, $\lambda_1=\sqrt2$, $\lambda_2=2$, $x_0=1$ and $\varepsilon=0.5$, a large $\varepsilon$ compared with Section \ref{somenumres}; the reason of this choice is that for smaller $\eps$ no interesting phenomenon was evident. We found that the dominating mode $y$ captured some small amount of energy from the residual mode $z_2$. Therefore, it is {\em not true} that the energy
always moves from the dominating to a residual mode, also the dominating mode can capture the energy and become ``more dominating''.
This seems to be related to the ``end of the black bumps'' displayed in many plots, see e.g.\ Figure \ref{typical}, namely to the interval of time where the residual mode returns the energy to the dominating mode.\par
$\bullet$ If $\mu=1$, $\lambda_1=\sqrt2$, $\lambda_2=2$ and $\eps=10^{-3}$, we could see some energy going from $y$ to $z_2$ only for $x_0\ge10$. Therefore,
the system turned out to be very stable. We suspect that, again, the ratios $\lambda_i/\mu$ play a major role.\par\medskip\par
What we have seen in this section suggests that degenerate problems such as \eq{PG4} are either extremely unstable (manifesting
instability for very small
energies) or extremely stable with instability appearing only for very large energies. This alternative depends on the ratios $\lambda_i/\mu$. It is
also clear that \eq{PG4} cannot remain stable for any energy since \eq{constant} fails to take into account both the interactions between the residual
modes and the perturbations of the periodic solution $y(t)=x_0\cos(\mu t)$: these are fairly small but for large energies they certainly play some role.\par
Summarizing, the degenerate problem \eq{PG4}, where  \eq{degenerate} holds, and the corresponding linearized problem \eq{constant} behave
quite differently when compared to \eq{hamilton2} and a neat pattern as the one described in Section \ref{somenumres} is not available.

\section{Mechanical interpretation and structural remedies}\label{mec}

In this section we aim to justify from a mechanical point of view the numerical results found in the previous sections. Let us first summarize the
main phenomena observed.\par
(I) As long as the two couples of parameters $(q,a)$ of \eq{retteparametriche} lie in the (white) stability region of the Mathieu diagram, see
Figures \ref{enlarged} and \ref{general}, the solution $(\bar y,0,0)=(x_0\cos(\mu t),0,0)$ to system \eq{hamilton2} is stable, see Definition \ref{def:stable}.\par
(II) When a couple $(q,a)$ lies in an instability region and is sufficiently far from the stability region, then the corresponding residual modes become
fairly large.\par
(III) When the couple $(q,a)$ lies in an instability region but is close to the stability region, our numerical results could not detect a
neat instability.\par\medskip
The most intriguing result is certainly (III). In order to better understand it, we compared this behavior with the somehow related behavior of the classical linear
Mathieu equation
\neweq{ancoraMathieu}
\ddot{w}+\Big(a+2q \cos(2 t)\Big) w=0\,.
\endeq
To obtain two independent solutions, we plotted the two solutions with initial data $(w(0),\dot{w}(0))\in\{(1,0);(0,1)\}$.
We analyzed in particular the two first instability regions. From \eq{qzero} we know that $(q,a)$ lies in the first (resp.\ second)
instability region for small enough $q$ if
$$1-q+O(q^2)<a<1+q+O(q^2)\qquad\mbox{\Big(resp. }4-\frac{1}{12}q^2+O(q^4)<a<4+\frac{5}{12}q^2+O(q^4)\, \mbox{\Big)}.$$
Therefore, we considered couples such as $(q,a)=(q,1)$ and $(q,a)=(q,4)$ for $q>0$ sufficiently small and we could observe the following facts.\par
(IV) The solutions were always unbounded (thereby confirming instability).\par
(V) For very small $q$ the solutions became large only after a long interval of time.\par
(VI) For larger values of $q$ the solutions became large much earlier in time.\par
(VII) For the same $q>0$ the instability was more evident when $a=1$ than when $a=4$.\par\medskip
The observation (VII) appears strictly related to (II) and (III) and enables us to conclude that
\begin{center}
\begin{minipage}{155mm}
{\em if the couple $(q,a)$ lies in the instability region of the Mathieu diagram, then the instability of the trivial solution of \eqref{ancoraMathieu}
increases with the distance of the couple $(q,a)$ from the stability regions.}
\end{minipage}
\end{center}

The model system \eq{hamilton2} is nonlinear and all its solutions are bounded in view of the energy conservation. Whence, we cannot expect that its solutions
start increasing in amplitude as for \eq{ancoraMathieu}. Roughly speaking,
\begin{center}
\begin{minipage}{155mm}
{\em when the residual mode exhibits a tendency to grow up, the energy conservation bounces it back and decreases its amplitude.}
\end{minipage}
\end{center}
We can however expect that the residual modes start growing up earlier in time and wider in amplitude if the parameters are far from the stability region.
This is precisely what we saw in our experiments, see Figure \ref{primaterna}. In particular, when the parametric lines \eq{rette} reach and intersect
a thin instability region (one of the cusps close to some $a=n^2$ with $n\ge2$), the parameters are so close to the stability region that the energy
inhibits the residual modes to capture a significant amount of energy. From the physical point of view, the instability which occurs when the lines
\eq{rette} cross a thin cusp is irrelevant, both because it has low probability to occur and because, even if it occurs, the residual
mode remains fairly small. In turn, from the mechanical point of view, we know that small torsional oscillations are harmless and the bridge would
remain safe. Summarizing, we conclude that
\begin{center}
\begin{minipage}{155mm}
{\em when the parametric lines \eqref{rette} cross a thin instability region, only small torsional oscillations appear and the bridge basically
remains stable.}
\end{minipage}
\end{center}

{From} the Mathieu diagram and from the asymptotic expansions of the characteristic curves, see \cite[Sections 2.151]{mc}, we learn that the instability regions become more narrow
as $a=n^2$ increases. Since the parametric lines \eq{rette} take their origin when $a=\lambda_i^2/\mu^2$ (see the right picture
in Figure \ref{calcoloenergia}), it would be desirable that $\lambda_i\gg\mu$. This gives a structural remedy to improve the torsional stability of
a bridge:
\begin{center}
\begin{minipage}{155mm}
{\em the torsional stability of a suspension bridge depends on the ratios between the torsional frequencies and the vertical frequencies; the larger
they are, more stable is the bridge.}
\end{minipage}
\end{center}
Therefore, our results suggest that bridges should be designed in such a way that these ratios are very large.
\par
\bigskip
\par

\textbf{Acknowledgments.} \noindent
The first and third Authors are partially supported by the Research Project FIR (Futuro in Ricerca) 2013 \emph{Geometrical and qualitative aspects of PDE's}.
The second Author is partially supported by the PRIN project {\em Equazioni alle derivate parziali di tipo ellittico e parabolico: aspetti geometrici,
disuguaglianze collegate, e applicazioni}. The three Authors are members of the Gruppo Nazionale per l'Analisi Matematica, la Probabilit\`a e le loro
Applicazioni (GNAMPA) of the Istituto Nazionale di Alta Matematica (INdAM).

\end{document}